\DeclareMathAlphabet{\mathpzc}{OT1}{pzc}{m}{it}
\DeclareSymbolFont{cyrletters}{OT2}{wncyr}{m}{n}
\DeclareMathSymbol{\Sha}{\mathalpha}{cyrletters}{"58}
\begin{document}

\baselineskip=17pt

\pagestyle{headings}

\numberwithin{equation}{section}

\makeatletter                                                           

\def\section{\@startsection {section}{1}{\z@}{-5.5ex plus -.5ex         
minus -.2ex}{1ex plus .2ex}{\large \bf}}                                 


\pagestyle{fancy}
\renewcommand{\sectionmark}[1]{\markboth{ #1}{ #1}}
\renewcommand{\subsectionmark}[1]{\markright{ #1}}
\fancyhf{} 
\fancyhead[LE,RO]{\slshape\thepage}
\fancyhead[LO]{\slshape\rightmark}
\fancyhead[RE]{\slshape\leftmark}

\addtolength{\headheight}{0.5pt} 
\renewcommand{\headrulewidth}{0pt} 

\newtheorem{thm}{Theorem}[section]
\newtheorem{mainthm}[thm]{Main Theorem}
\newtheorem*{T}{Theorem 1'}

\newcommand{\ZZ}{{\mathbb Z}}
\newcommand{\GG}{{\mathbb G}}
\newcommand{\Z}{{\mathbb Z}}
\newcommand{\RR}{{\mathbb R}}
\newcommand{\NN}{{\mathbb N}}
\newcommand{\GF}{{\rm GF}}
\newcommand{\QQ}{{\mathbb Q}}
\newcommand{\CC}{{\mathbb C}}
\newcommand{\FF}{{\mathbb F}}

\newtheorem{lem}[thm]{Lemma}
\newtheorem{cor}[thm]{Corollary}
\newtheorem{pro}[thm]{Proposition}
\newtheorem*{proposi}{Proposition \ref{pro:pro63}}
\newtheorem*{thm_notag}{Theorem}
\newtheorem{problem}{Problem}
\newtheorem*{Cassels}{Cassels' question}

\newtheorem{proprieta}[thm]{Property}
\newcommand{\pf}{\noindent \emph{Proof.} \ }
\newcommand{\eop}{${\Box}$  \relax}
\newtheorem{num}{equation}{}

\theoremstyle{definition}
\newtheorem{rem}[thm]{Remark}
\newtheorem{rems}[thm]{Remarks}
\newtheorem{D}[thm]{Definition}
\newtheorem{Not}{Notation}

\newtheorem{Def}{Definition}

\newcommand{\res}{\operatorname{res}}
\newcommand{\nsplit}{\cdot}
\newcommand{\GGG}{{\mathfrak g}}
\newcommand{\GL}{{\rm GL}}
\newcommand{\SL}{{\rm SL}}
\newcommand{\SP}{{\rm Sp}}
\newcommand{\LL}{{\rm L}}
\newcommand{\Ker}{{\rm Ker}}
\newcommand{\la}{\langle}
\newcommand{\ra}{\rangle}
\newcommand{\PSp}{{\rm PSp}}
\newcommand{\Uni}{{\rm U}}
\newcommand{\SUni}{{\rm SU}}
\newcommand{\GU}{{\rm GU}}
\newcommand{\GO}{{\rm GO}}
\newcommand{\Pro}{{\rm P}}
\newcommand{\Aut}{{\rm Aut}}
\newcommand{\Alt}{{\rm Alt}}
\newcommand{\Sym}{{\rm Sym}}
\newcommand{\End}{{\rm End}}

\newcommand{\isom}{{\cong}}
\newcommand{\z}{{\zeta}}
\newcommand{\Gal}{{\rm Gal}}
\newcommand{\SO}{{\rm SO}}
\newcommand{\PSO}{{\rm PSO}}
\newcommand{\PO}{{\rm PO}}
\newcommand{\SU}{{\rm SU}}
\newcommand{\PGL}{{\rm PGL}}
\newcommand{\PSL}{{\rm PSL}}
\newcommand{\loc}{{\rm loc}}
\newcommand{\Sp}{{\rm Sp}}
\renewcommand{\O}{{\rm O}}
\newcommand{\Suz}{{\rm Suz}}
\newcommand{\PUni}{{\rm PU}}
\newcommand{\Id}{{\rm Id}}
\newcommand{\I}{{\rm I}}
\newcommand{\Ind}{{\rm Ind}}
\newcommand{\s}{{\sigma}}

\newcommand{\F}{{\mathbb F}}
\newcommand{\Q}{{\mathbb Q}}
\newcommand{\R}{{\mathbb R}}
\newcommand{\N}{{\mathbb N}}
\newcommand{\E}{{\mathcal{E}}}
\newcommand{\G}{{\mathcal{G}}}
\newcommand{\A}{{\mathcal{A}}}
\newcommand{\C}{{\mathcal{C}}}
\newcommand{\modn}{{\rm \hspace{0.1cm} (mod \hspace{0.1cm} }}
\newcommand{\bmu}{{\textbf \mu}}
\newcommand{\hloc}{{\rm loc}}
\newcommand{\f}{{\mathfrak f}}
\newcommand{\g}{{\mathfrak g}}
\newcommand{\h}{{\mathfrak h}}
\newcommand{\cost}{{\mathfrak c}}

\newcommand\ddfrac[2]{\frac{\displaystyle #1}{\displaystyle #2}}

\let\oldproofname=\proofname
\renewcommand{\proofname}{\rm\bf{\oldproofname}}

\title{Cohomology of groups acting on \\ vector spaces over finite fields}
\author{Davide Lombardo and Laura Paladino}
\date{  }
\maketitle

\begin{abstract}
Let $\FF_q$ be the finite field with $q=p^m$ elements and $G$ be a subgroup of $\GL_n(\FF_q)$.
A famous theorem of Nori published in 1987 states that there
exists a (non-effective) constant $c(n)$, depending only on $n$, such that if $p>c(n)$ and $G$ acts semisimply on $\FF_p^n$, then  $H^1(G,\FF_p^n)=0$. 
We solve the long-standing problem, 
also considered by Serre, of giving an effective proof of Nori's theorem. 
Our approach yields the optimal constant $c(n)=n+2$.
We also prove a more general version of Nori's theorem, namely, that for all powers $q$ of $p$, if $G$ acts semisimply on $\FF_q^n$ and $p>n+2$, then $H^1(G,\FF_q^n)$ is trivial.
We apply these results to refine a criterion, proved by \c{C}iperiani and Stix, which gives sufficient conditions for an affirmative answer to a classical
question posed by Cassels in the case of abelian varieties over number fields.
\end{abstract}

\section{Introduction} \label{sec0}
In the study of group actions, the first cohomology group is an object of fundamental importance, and much work has been devoted to understanding $H^1(G, M)$ for general groups $G$ and $G$-modules $M$. Of particular interest are criteria guaranteeing the vanishing of $H^1(G, M)$, of which there are several:  
for example, $H^1(G, M)$ is known to be trivial
when $G$ is a group of Lie type and $M$ is a minimal irreducible $G$-module \cite{CPS, CPS2} or 
$M=L(\lambda)$ is an irreducible $G$-module of highest weight $\lambda$ 
\cite{Georgia}. 
Let $\FF_q$ be the finite field with $q=p^m$ elements, where $p$ is a prime number and $m$ is a positive integer, and let $G$ be a subgroup of $\GL_n(\F_q)$, for some positive integer $n$. In this work we are concerned with the vanishing of $H^1(G, V)$ for the natural $G$-module $V = \F_q^n$, under the assumption that the action of $G$ on $V$ is semisimple.
A deep theorem of Nori \cite[Theorem E, page 271]{Nori} gives the following sufficient conditions
for the triviality of $H^1(G,\FF_q^n)$, when $q=p$.

\begin{thm}[Nori, 1987] \label{Nori}
Let $p$ be a prime number and let $n$ be a positive integer.
Let $G$ be a subgroup of $\GL_n(\FF_p)$
acting semisimply on $\FF_p^n$. There exists a constant $c(n)$, depending only on $n$, such that if $p>c(n)$, then $$H^1(G,\FF_p^n)=0.$$

\end{thm}

\noindent  In his letter to Marie-France Vign\'eras in 1986,
Serre gives an alternative proof of Nori's theorem and mentions the problem of finding an explicit $c(n)$ (see \cite{Oev}). He also
states that he has no conjectures about the possible value of $c(n)$, as well as no examples \cite[Remark 2, pag.\ 42]{Oev}.
There are no results in the literature which give an explicit $c(n)$ for all $G$.
Only for groups $G$ containing no nontrivial normal $p$-subgroup, Guralnick \cite{Gur} proved in 1999 that one can take $c(n)=n+2$. 
Here we answer Serre's question by showing that one can take $c(n)=n+2$ for every $n$ and $G$. In addition, we prove more generally that Nori's theorem can be extended (with the same optimal constant $c(n)=n+2$)
to subgroups $G\leq\GL_n(\FF_q)$ acting semisimply on $\FF_q^n$, where $q$ is now allowed to be any power of $p$.

\begin{thm} \label{P1_bis}
Let $p$ be a prime number and let $m, n$ be positive integers.
Let $G$ be a subgroup of $\GL_n(\FF_q)$
acting semisimply on $\FF_q^n$, where $q=p^m$. If $p>n+2$, then $$H^1(G,\FF_q^n)=0.$$
\end{thm}

\noindent 
The constant $c(n)=n+2$ is sharp, since there are counterexamples for $p\leq n+2$.
The simplest counterexamples can be obtained from the alternating groups $A_p$, with $p \geq 5$, in dimension $n=p-2$. Let $U=\{(x_1,\ldots,x_p) \in \F_p^p : \sum_{i=1}^p x_i=0\}$ and $W=\{(x,\ldots,x) \in \F_p^p : x \in \F_p\} \subset U$. Clearly, $W \cong \F_p$ is a trivial 1-dimensional $A_p$-module, while, by \cite[p.~186]{KL}, $U/W$ is a simple $A_p$-module of dimension $p-2$. Using \cite[Lemma 5.3.4]{KL}, one checks immediately that the sequence
\[
\{0\} \to W \to U \to U/W \to \{0\}
\]
does not split, which proves $H^1(A_p, U/W) \neq (0)$. Further counterexamples with $n=p-2$ are given by certain projective special linear groups $\textrm{P}\SL_r(w)$, where $w=r^{\alpha}$ is a power of a prime $r\neq p$ and $p$ equals $\frac{w^r-1}{w-1}$ (see the introduction to \cite{Gur}).

\par  The techniques we use to prove Theorem \ref{P1_bis} are different from those employed by Nori. As a first step, we use the aforementioned result of Guralnick \cite{Gur}  to handle the special case when $V = \FF_q^n$ is an irreducible $G$-module (see Proposition \ref{prop: simple plus trivial} for a slightly more general statement). As $V$ is assumed to be a semisimple $G$-module, we can write $V=\bigoplus_{i=1}^k V_i$, where each $V_i$ is an irreducible $G$-submodule of $V$. Since cohomology commutes with finite direct sums, we have $H^1(G,V)=\bigoplus_{i=1}^k H^1(G, V_i)$, where each $V_i$ is a simple $G$-module. However, the $V_i$ need not be \textit{faithful} $G$-modules, hence we cannot simply apply the result for simple modules to every $H^1(G, V_i)$. We circumvent this difficulty by a careful study of the kernels $N_i$ of the natural projections $\GL(V) \to \GL(\bigoplus_{j \neq i}V_j)$ and of their cohomology. Combined with a double induction (on $|G|$ and on $k$, the number of irreducible components of $V$), this leads to a proof of our main theorem. 

\par  The triviality of cohomology groups also has applications beyond group theory. As a first immediate consequence, in Section \ref{sect: supplements} we use Theorem \ref{P1_bis} to obtain structural information about the semisimple subgroups of $\GL_n(\F_q)$ when $p>n+3$  (see Corollary \ref{cor: no normal subgroups of index a power of p}). In the same section  we also make a remark about the vanishing of $H^1$ for groups that decompose nontrivially as a tensor product (Proposition \ref{tensor}). This gives a further criterion for the vanishing of $H^1$ which can be useful in itself, and shows that the constant in Theorem \ref{P1_bis}, though sharp in general, can be greatly improved if one restricts to special classes of subgroups of $\GL_n(\F_q)$.
\par The vanishing of cohomology groups also finds applications in number theory, for instance in local-global questions (which often have an equivalent formulation in terms of principal homogeneous spaces under certain group schemes).  We
give an application of Theorem
\ref{P1_bis} to the following question,
posed by Cassels in 1962, in the third paper of his remarkable series on the arithmetic of curves of genus one  \cite{Cas}.

\begin{Cassels} 
Let $k$ be a number field, with algebraic closure $\overline{k}$, and $\E$ be an elliptic curve defined over $k$. 
Let $G_k$ be the absolute Galois group $\Gal(\overline{k}/k)$ and $\Sha(k,\E)$ be
the Tate-Shafarevich group of $\E$ over $k$. Fix a prime $p$. Are the elements of $\Sha(k,\E)$ infinitely divisible by $p$, when considered as elements of the Weil-Ch\^{a}telet group $H^1(G_k,\E)$ of all classes of principal homogeneous spaces for $\E$ defined over $k$?
\end{Cassels}

\noindent An element $\sigma\in \Sha(k,\E)$ is said to be \textit{infinitely
divisible} by a prime $p$ when it is divisible by $p^m$ for all positive integers $m$, i.e.,
when for all $m \in \mathbb{N}$ there exists $\tau_m\in H^1(G_k,\E)$ such that $\sigma =p^m\tau_m$.

For $m=1$, in 1962 Cassels \cite{Cas4} showed (using a lemma of Tate) that every $\sigma\in \Sha(k,\E)$ is divisible by $p$ in $H^1(G_k,\E)$, but the question remained open for 50 years for powers $p^m$ with $m\geq 2$. Only in 2012 did we get a positive answer for all $p\geq  ( 3^{[k:\Q] /2} + 1 )^2$ and all $m$, implied by the results in \cite{PRV} combined with
\cite[Theorem 3]{Cre2}
(see \cite{DP} for further details and see also \cite[Theorem B]{CS}). In addition,
results from 2014 \cite{PRV2} imply a positive answer over
$\QQ$ for all $p\geq 5$ (see \cite{DP} for more details and \cite{LW} for a second proof of the same result). This bound is best possible for $k=\QQ$: in 2016, Creutz gave counterexamples to the divisibility by $p^m$ for $p=2,3$ and $m\geq 2$ \cite{Cre}.

Since 1972, Cassels' question has also been considered in the more general setting of an abelian variety $\A$ defined over $k$ instead of an elliptic curve \cite{Bas, CS, Cre2, GR}. 
In \cite{Cre2}, Creutz constructed, for every prime $p$, infinitely many non-isomorphic abelian varieties over $\QQ$ for which the answer to (the analogue of) Cassels' question is negative.

Let $\A[p^m]$ be the $p^m$-torsion subgroup of $\A$ and
$\A[p^m]^{\vee}$ be its Cartier dual. The aforementioned result of Creutz \cite[Theorem 3]{Cre2}
implies that the triviality of $\Sha(k,\A[p^m]^{\vee})$,
for every $m$, is a sufficient condition for Cassels' question to have an affirmative answer for $p$ and $\A$ (see also \cite[Proposition 4.3]{CS}).
\c{C}iperiani and Stix \cite[Theorem D]{CS} also found sufficient conditions ensuring the triviality of $\Sha(k,\A[p^m])$ for all $m \geq 1$. In particular, the vanishing of these groups implies an affirmative answer to Cassels' question in the case 
of principally polarized abelian varieties.

As a consequence of Theorem \ref{P1_bis}, we 
refine the criterion of \c{C}iperiani and Stix
and give sufficient conditions on $\A[p]$ which ensure the simultaneous triviality of
 $\Sha(k,\A[p^m])$ and $\Sha(k,\A[p^m]^{\vee})$, and in particular, imply a positive answer to Cassels' question. Notice that this result also applies to non-principally polarised abelian varieties.
\begin{thm} \label{cor_cass}
Let $\A$ be an abelian variety of dimension $g$ defined over a number field $k$.
For any prime $p>2g+2$, if $\A[p]$ is an irreducible $G_k$-module which is not isomorphic to
a subquotient of $\End(\A[p])$, then Cassels' question has an affirmative answer
for $p$ and $\A/k$. 
\end{thm}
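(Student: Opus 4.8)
The plan is to deduce the statement from Theorem \ref{P1_bis} by feeding its cohomological conclusion into the criteria already available in the literature. Recall that, by Creutz \cite{Cre2} and by Ciperiani--Stix \cite{CS}, an affirmative answer to Cassels' question for $p$ in $\A$ follows once one knows that $\Sha(k,\A[p^m]^{\vee})=0$ for every $m$. So the whole problem reduces to proving this vanishing for all $m$ under the single hypothesis that $\A[p]$ is an irreducible $G_k$-module with $p>(4g+1)^2$. The numerology is the key match: since $\A$ has dimension $g$, the torsion module $\A[p]$ is an $\FF_p$-vector space of dimension $n=2g$, and the bound $p>(4g+1)^2$ is exactly the bound $p>(2n+1)^2$ of Theorem \ref{P1_bis} for $n=2g$.

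First I would reinterpret the dual module. Via the Weil pairing the Cartier dual $\A[p^m]^{\vee}$ is identified, as a $G_k$-module, with $\hat{\A}[p^m]$, the $p^m$-torsion of the dual abelian variety $\hat{\A}$, which again has dimension $g$. At the level $m=1$ one has $\hat{\A}[p]\cong\mathrm{Hom}(\A[p],\mu_p)$, i.e. the contragredient of $\A[p]$ twisted by the mod-$p$ cyclotomic character; since twisting by a character and passing to the contragredient both preserve irreducibility, the irreducibility of $\A[p]$ as a $G_k$-module forces $\hat{\A}[p]$ to be irreducible as well. Thus it suffices to establish $\Sha(k,\hat{\A}[p^m])=0$ for every $m$, and the hypotheses at the bottom level $m=1$ are available for $\hat{\A}$ exactly as they are for $\A$.

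Next I would supply the cohomological input. Let $G=\Gal(k(\hat{\A}[p])/k)$, a finite group acting faithfully on $\hat{\A}[p]\cong\FF_p^{2g}$; by the previous paragraph this action is irreducible, hence semisimple. Since $p>(4g+1)^2=(2\cdot 2g+1)^2$, Theorem \ref{P1_bis} applied with $n=2g$ and $q=p$ yields $H^1(G,\hat{\A}[p])=0$, and the same argument gives $H^1(\Gal(k(\A[p])/k),\A[p])=0$. These two vanishings are precisely the hypotheses required by the criterion of Ciperiani--Stix \cite[Theorem D]{CS}, which then lifts the mod-$p$ information to all levels and gives $\Sha(k,\hat{\A}[p^m])=0$, equivalently $\Sha(k,\A[p^m]^{\vee})=0$, for every $m$. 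Combined with the first paragraph this answers Cassels' question affirmatively for $p$ in $\A$.

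The main obstacle is not the cohomology vanishing itself --- Theorem \ref{P1_bis} hands that over once irreducibility is known --- but rather checking that the hypotheses of \cite[Theorem D]{CS} match exactly what we produce at the mod-$p$ level, and in particular that the passage from the finite-group cohomology $H^1(G,\hat{\A}[p])$ to the global Tate--Shafarevich group $\Sha(k,\hat{\A}[p^m])$ for all $m$ is legitimate. This lifting step is where irreducibility of $\A[p]$ is genuinely used (through a Chebotarev and inflation--restriction argument that kills the locally trivial classes landing outside the image of $H^1(G,\hat{\A}[p])$), and it is the one place where I would quote the structure of the Ciperiani--Stix criterion carefully rather than invoke Theorem \ref{P1_bis} alone.
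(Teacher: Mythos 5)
Your route is the same one the paper takes: reduce Cassels' question to showing $\Sha(k,\A[p^m]^{\vee})=0$ for every $m$ (via \cite[Theorem 3]{Cre2} or \cite[Proposition 13]{CS}), transfer irreducibility from $\A[p]$ to its dual, and then combine Theorem \ref{P1_bis} (with $n=2g$, $q=p$) with the \c{C}iperiani--Stix criterion to lift the mod-$p$ information to all levels. Your duality step --- identifying $\A[p]^{\vee}$ with $\A^{\vee}[p]\cong\textrm{Hom}(\A[p],\mu_p)$ and noting that contragredient plus cyclotomic twist preserves irreducibility --- is a perfectly good variant of the paper's argument, which instead matches submodules of $\A[p]$ with duals of quotients of $\A[p]^{\vee}$.

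The genuine gap is in how you invoke the criterion. You claim that the two vanishings $H^1(\Gal(k(\A[p])/k),\A[p])=0$ and $H^1(\Gal(k(\A^{\vee}[p])/k),\A^{\vee}[p])=0$ are ``precisely the hypotheses required by'' \cite[Theorem D]{CS}. They are not: the criterion (Theorem \ref{CS} in the paper), applied to $\A^{\vee}$, requires \textbf{1)} the cohomological vanishing $H^1(\Gal(k(\A^{\vee}[p])/k),\A^{\vee}[p])=0$ \emph{and} \textbf{2)} that the $G_k$-modules $\A^{\vee}[p]$ and $\textrm{End}(\A^{\vee}[p])$ have no common irreducible subquotient. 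Your proposal never states, let alone verifies, hypothesis \textbf{2)}; and the second vanishing you do supply (the one for $\A[p]$ itself) plays no role in applying the criterion to $\A^{\vee}$. This is exactly where the paper does additional work: in the proof of Theorem \ref{thm_cass} it argues that irreducibility of the mod-$p$ module already yields hypothesis \textbf{2)}, since an irreducible module has no nontrivial proper $G_k$-subquotients, and only then does it conclude $\Sha(k,\A[p^m]^{\vee})=0$ for all $m$ from Theorem \ref{CS}. The omission is fillable from your standing hypothesis of irreducibility, but as written your appeal to \cite[Theorem D]{CS} is unjustified --- and your closing paragraph, which flags ``checking that the hypotheses of \cite[Theorem D]{CS} match'' as the delicate point, defers precisely the verification that is missing.
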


\bigskip\noindent On a related topic, in \cite{CS} and \cite{Cre} the authors ask whether
a local-global principle for divisibility by $p^m$ holds in $H^1(G_k,\A)$.
Let $v$ be a place of $k$ and let $k_v$ be the completion of $k$ at $v$.
We denote by $\overline{k_v}$ the algebraic closure of $k_v$ and by $G_{k_v}$
the Galois group $\Gal(\overline{k_v}/k_v)$.
An element $\sigma\in H^1(G_k,\A)$ is locally divisible by $p^m$ over $k_v$ if
there exists $\tau_v\in H^1(G_{k_v},\A)$ such that $\res_v(\sigma)=p^m \tau_v$,
where  $\res_v(\sigma)$ is the image of $\sigma$ under the restriction map $\res_v: H^1(G_k,\A)\rightarrow H^1(G_{k_v},\A)$.
It is natural to ask whether an element locally divisible by $p^m$ for all $k_v$ is 
divisible by $p^m$ in  $H^1(G_k,\A)$, i.e., if 
the  local-global principle for divisibility by $p^m$ holds in  $H^1(G_k,\A)$. 
In Section \ref{last} we show that the hypotheses of Theorem \ref{cor_cass} are also sufficient to ensure the validity of this local-global principle.

\begin{cor} \label{last_cor}
Let $\A$ be an abelian variety of dimension $g$ defined over a number field $k$.
For every prime $p>2g+2$, if $\A[p]$ is an irreducible $G_k$-module which is not isomorphic to a subquotient
of $\End(\A[p])$, then 
the local-global principle for divisibility by $p^m$ holds in $H^1(G_k,\A)$, 
for every $m\geq 1$.
\end{cor}

\section{Notation and preliminaries} \label{sec1}

Let $n, m$ be positive integers, let $p$ be a prime number, and let $V=\FF_q^n$, with $q=p^m$. We will write $\GL_n(q)$ for the group $\GL_n(\F_q) \cong \GL(V)$.
Throughout the paper, when we say that a group $G\leq \GL_n(q)$ acts on $V$ we mean the natural action given by matrix multiplication. 



\subsection{Preliminary lemmas}

We collect here two facts about the invariants of a group acting on a vector space and a criterion for the vanishing of $H^1(G, V)$ in terms of the vanishing of $H^1(H, V)$ for a normal subgroup $H$ of $G$.

\begin{lem}\label{lemma: p-group has nontrivial invariants}
Let $p$ be a prime number, $k$ be a finite field of characteristic $p$, and $G$ be a $p$-group. For every finite-dimensional $k$-vector space $V$ and every linear action of $G$ on $V$ (that is, an action of $G$ on $V$ via $\operatorname{GL}(V)$), the subspace $V^G = \{v \in V : g \cdot v=v \; \forall g \in G \}$ has positive dimension.
\end{lem}
\begin{proof}
Consider the orbit equation
\[
|V| = |V^G| + \sum_{v \in R} |\operatorname{Orbit}(v)|,
\]
where $R$ is a set of representatives for the non-trivial orbits of the action of $G$ on $V$. For $v \in R$ the cardinality
$
|\operatorname{Orbit}(v)| = \frac{|G|}{|\operatorname{Stab}(v)|}
$
is a power of $p$ which is not 1 (since $\operatorname{Stab}(v)$ is a proper subgroup of $G$, and the order of $G$ is a power of $p$). The equation above then gives $|V| \equiv |V^G| \pmod{p}$. We obtain $|V^G| \equiv |V| \equiv |k|^{\dim V} \equiv 0 \pmod p$, so $|V^G|$ cannot consist only of the zero vector.
\end{proof}

\begin{lem}\label{lemma: invariants for normal subgroup}
    Let $G$ be a group acting linearly on the vector space $V$. Let $N$ be a normal subgroup of $G$. The subspace
    \[
    V^N := \{v \in V : n \cdot v = v \; \forall n \in N \}
    \]
    is invariant under the action of $G$. In particular, if $V$ is irreducible and $V^N \neq \{0\}$, then $V^N=V$.
\end{lem}
\begin{proof}
    We check invariance: if $v$ is in $V^N$ and $g$ is any element of $G$, proving that $g \cdot v$ is in $V^N$ amounts to showing that $n \cdot g \cdot v = g \cdot v$ for all $n \in N$. Equivalently, we need to show $(g^{-1} n g) \cdot v = v$ for all $n \in N$. As $N$ is a normal subgroup, $g^{-1} n g$ is an element of $N$ for every $g \in G$, and by construction, $v$ is fixed by any element of $N$, so $(g^{-1} n g) \cdot v = v$ as desired. The rest of the statement follows.
\end{proof}





The next corollary  deals with the case when $G$ has a nontrivial normal subgroup $H$ with trivial cohomology. 

\begin{cor} \label{extension}  
Let $V=\FF_q^n$ and let $G\leq \GL_{n}(q)$ be a subgroup acting irreducibly on $V$. Let
$H$ be a nontrivial normal subgroup of $G$. 
If  $H^1(H,V)=0$, then $H^1(G,V)=0$.
\end{cor}

\begin{proof} 
 Consider the inflation-restriction
exact sequence

\begin{equation} \label{infres} 0\rightarrow H^1(K,V^H)\rightarrow H^1(G,V)\rightarrow H^1(H,V)^{K}, \end{equation}

\noindent where $K=G/H$. Since $H^1(H,V)=0$ by assumption, we have $H^1(G,V)\simeq H^1(K,V^H)$. By Lemma \ref{lemma: invariants for normal subgroup}, the subspace $V^H$ is a $G$-submodule of $V$.
As $V$ is an irreducible $G$-module, $V^H$ is either trivial or equal to $V$, and since $H$ is nontrivial the second case cannot arise. 
Thus, $V^H$ is trivial,
so we obtain $H^1(K,V^H)=0$ and $H^1(G,V)=0$.
\end{proof}

\section{Proof of the main theorem}

We are now ready to prove our main theorem. The proof is by double induction, both on the order of $G$ and on the number of irreducible components in the representation $V$ of $G$. We start with the following `base case' when $V$ has only one non-trivial irreducible component.


\begin{pro}\label{prop: simple plus trivial}
Let $p$ be a prime and $m, n$ be positive integers. Write $q=p^m$ and $V=\F_q^n$. Let $G$ be a subgroup of $\GL_n(q)$ and consider $V$ as a $G$-module through the natural action. Suppose that, as a representation of $G$, $V$ decomposes as $V_1 \oplus V_{\operatorname{triv}}$, where $V_1$ is a non-trivial irreducible representation and $V_{\operatorname{triv}}$ is a subspace (of any dimension, including $0$) on which $G$ acts trivially. If $p > n+2$, then $H^1(G, V)=0$.
\end{pro}

\begin{proof}
    If $G$ has no non-trivial normal $p$-subgroups, the result follows from \cite[Theorem A]{Gur}, so it suffices to show that this hypothesis holds. Suppose by contradiction that $G$ admits a non-trivial normal $p$-subgroup $N$. We can consider $N$ as acting on $V_1$, and by Lemma \ref{lemma: p-group has nontrivial invariants} we have $V_1^N \neq \{0\}$. By Lemma \ref{lemma: invariants for normal subgroup}, this implies $V_1^N=V_1$. This equality implies that $N$ acts trivially on all of $V$, hence that $N=\{\operatorname{id}\}$, contradicting the fact that $N$ is nontrivial.
\end{proof}
\noindent \textbf{Proof of Theorem \ref{P1_bis}.}
    We consider $V=\F_q^n$ as a $G$-module.
    Since $V$ is semisimple, we can write it as $V = V_1 \oplus \cdots \oplus V_k$, where each subspace $V_i$ is a simple $G$-submodule of $V$. Several $V_i$ can be isomorphic to each other and the decomposition $V \cong V_1 \oplus \cdots \oplus V_k$ is not canonical, but we simply fix one such decomposition. Note that the number $k$ of simple summands is a well-defined invariant of $V$, independent of our choice of the simple submodules $V_i$ (by the Jordan-H\"older theorem).
    
    The proof is by double induction, on the order $d$ of $G$ and the number $k$ of irreducible components of $V$. We well-order pairs $(d, k)$ by setting $(d_1, k_1) < (d_2, k_2)$ if $d_1 < d_2$ or $d_1=d_2$ and $k_1 < k_2$. In other words, in the inductive step, we want to reduce either to a strictly smaller group, or to a group of the same order which acts via a representation with fewer irreducible components.

The case $d=1$ is obviously trivial for any $k$ since the trivial group has trivial cohomology. The case of arbitrary $d$ and $k=1$ follows from Proposition \ref{prop: simple plus trivial} (taking $V_{\operatorname{triv}}=(0)$). We can now begin the inductive argument. Notice that, since we have already dealt with the case $k=1$, we will be able to assume $k \geq 2$.

For each $i=1, \ldots, k$ there is a natural projection
\[
\pi'_i : G \to \prod_{j \neq i} \GL(V_j),
\]
given by the restriction of the action to the subspace $W_i := \bigoplus_{j \neq i} V_j$ where we omit the $i$-th irreducible component. Notice that the image $H_i$ of $\pi'_i$ can be identified with a subgroup of $\prod_{j \neq i} \GL(V_j) \subseteq \GL(\bigoplus_{j \neq i} V_j) = \GL(W_i)$, and that $H_i$ is the group through which $G$ acts on $W_i$. In particular, $W_i$ is a semisimple representation of $G$, and equivalently of $H_i$. We denote by $N_i$ the kernel of $\pi'_i$.

By Clifford's theorem \cite[Theorem 1]{Cli}, since $N_i$ is a normal subgroup of $G$ and $V$ is semisimple as a representation of $G$, the restriction of the representation $V$ to $N_i$ is still semisimple. Moreover, by Lemma \ref{lemma: invariants for normal subgroup}, the subspace $V^{N_i}$ is a $G$-submodule of $V$. Notice that by construction we have $W_i \subseteq V^{N_i} \subseteq V$. Since $V/W_i \cong V_i$ is an irreducible representation of $G$, there are only two $G$-submodules of $V$ containing $W_i$, namely $W_i$ itself and $V$.
Therefore, for each $i=1, \ldots, k$ there are two cases, which we label $(a)_i$ and $(b)_i$:
\begin{enumerate}
    \item[$(a)_i$] $V^{N_i}=W_i$; in particular, $N_i$ is not the trivial group.
    \item[$(b)_i$] $V^{N_i}=V$, that is, $N_i$ acts trivially on all of $V$, and therefore $N_i=\{\operatorname{id}\}$.
\end{enumerate}
Consider now, for each $i=1,\ldots,k$, the inflation-restriction sequence corresponding to the normal subgroup $N_i$ of $G$:
\begin{equation}\label{eq: inf-res}
1 \to H^1(G/N_i, V^{N_i}) \xrightarrow{\operatorname{inf}} H^1(G,V) \xrightarrow{\operatorname{res}} H^1(N_i, V)^{G/N_i}.
\end{equation}
We first show that, for every $i$, the cohomology group $H^1(N_i, V)^{G/N_i}$ vanishes. Indeed, in case $(b)_i$ the group $N_i$ is trivial, hence its cohomology vanishes. On the other hand, in case $(a)_i$ we distinguish two subcases:
\begin{enumerate}
    \item $N_i$ is a proper subgroup of $G$. We have already observed that the restriction of $V$ to $N_i$ is a semisimple representation, so we can apply the inductive hypothesis since $|N_i| < |G|$ and obtain as desired that $H^1(N_i, V)=(0)$.
    \item $N_i=G$. By definition of $N_i$, this means that $G$ acts trivially on all $V_j$ with $j \neq i$. Hence, $V$ is the direct sum of the simple representation $V_i$ and of the trivial representation $W_i$. If $V_i$ is also the trivial representation, then $G$ acts trivially on all of $V$ and we are done (since then $N_i=G$ is the trivial group, whose cohomology vanishes). Otherwise, if $V_i$ is a non-trivial simple representation, the cohomology group $H^1(N_i, V)=H^1(G, V)=H^1(G, V_i \oplus W_i)$ vanishes by Proposition \ref{prop: simple plus trivial} since $W_i$ is a trivial representation of $G$.
\end{enumerate}
Thus, the inflation-restriction sequence \eqref{eq: inf-res} gives an isomorphism $H^1(G/N_i, V^{N_i}) \cong H^1(G, V)$ for all $i$, and to conclude the proof it suffices to show that $H^1(G/N_i, V^{N_i})$ vanishes for at least one $i$.
Again we distinguish cases:
\begin{enumerate}
    \item if, for some index $i$, we are in case $(a)_i$, then by definition we have $V^{N_i} = W_i$ and $N_i$ is not the trivial group. We have already observed that the quotient $G/N_i$ can be canonically identified with the image $H_i$ of the natural map $\pi'_i : G \to \prod_{j \neq i} \GL(V_j) \subseteq \GL(W_i)$, so $V^{N_i}$ is the natural module for the subgroup $H_i$ of $\GL(W_i)$. Recall that $V^{N_i} = W_i = \bigoplus_{j \neq i} V_j$ is semisimple for the action of $H_i$. Finally, $p>\dim V + 2 > \dim W_i + 2$.
    Thus, since $|H_i|=|G/N_i| < |G|$, we can apply the inductive hypothesis to deduce $H^1(G/N_i, V^{N_i})=H^1(H_i, V^{N_i})=(0)$, as desired.
    \item suppose instead that for \textit{all} indices $i$ we are in case $(b)_i$, so that $N_i=\{\operatorname{id}\}$ for each $i$. By definition, this means that for each $i$ the projection $G \to \GL\left(\bigoplus_{j \neq i} V_j\right)$ is injective, and therefore that $W_i = \bigoplus_{j \neq i} V_j$ is a faithful, semisimple $G$-module with $k-1$ irreducible components. Moreover, one has $p > \dim V+2 > \dim W_i + 2$.
    For each $i$ we can therefore apply the inductive hypothesis to deduce that $H^1(G, W_i)=(0)$. By additivity of the cohomology functor, this gives 
    \[
    (0) = H^1(G, \bigoplus_{j \neq i} V_j) = \bigoplus_{j \neq i} H^1(G, V_j),
    \]
    hence $H^1(G, V_j)=(0)$ for all $j \neq i$. But since this holds for each $i$, we deduce $H^1(G, V_j)=(0)$ for all $j$ (here we use $k \geq 2$), hence 
    \[
    H^1(G, V)= H^1(G, \bigoplus_{i=1}^k V_i) = \bigoplus_{i=1}^k H^1(G, V_i) = (0),
    \]
    as desired. \eop
\end{enumerate}

\section{Two supplements}\label{sect: supplements}
\noindent We begin by proving a direct consequence of Theorem \ref{P1_bis} concerning semisimple subgroups of $\GL_n(q)$.

\begin{cor}\label{cor: no normal subgroups of index a power of p}
Let $n, m$ be positive integers and let $p$ be a prime greater than $n+3$. Let $G$ be a subgroup of $\GL_n(q)$, where $q=p^m$. If $G$ acts semisimply on $\F_q^n$, then $G$ has no normal subgroup of index $p^k$, for any $k \geq 1$. In particular, $G$ admits no non-trivial homomorphism to a $p$-group.
\end{cor}


\begin{proof}
    Let $V=\F_q^n$ be the natural module for the action of $G$ and consider the semisimple representation of $G$ given by $V' = V \oplus \F_q$, where $G$ acts trivially on $\F_q$. Since $\dim V'=n+1$, Theorem \ref{P1_bis} shows $H^1(G,V) \oplus H^1(G, \F_q) = H^1(G, V')=0$, which implies in particular $H^1(G, \F_q) = \operatorname{Hom}(G, \F_q)=0$. It follows that $G$ admits no non-trivial homomorphisms towards $C_p$, the cyclic group of order $p$. If $N$ were a normal subgroup of $G$ of index $p^k$ for some $k \geq 1$, then the $p$-group $G/N$ would admit $C_p$ as a quotient, so $G$ would also admit $C_p$ as a quotient, contradiction.
\end{proof}

\noindent We also consider tensor product groups $G=G_1 \otimes \cdots \otimes G_t$, where each $G_i$ is a subgroup of $\GL(V_i) \cong \GL_{d_i}(q)$ for some $d_i$. The group $G$ acts on $V = V_1 \otimes \cdots \otimes  V_t$, and it is a natural expectation that the triviality of all the groups $H^1(G_i,V_i)$ implies $H^1(G,V)=0$.
Assuming that $V$ is irreducible, we show the stronger statement that it suffices that one of the groups $H^1(G_i,V_i)$ (with $G_i$ nontrivial) 
vanishes to get $H^1(G,V)=0$. In particular, this shows that for this special class of groups the bound of Theorem \ref{P1_bis} can be greatly improved.


\begin{pro} \label{tensor}
Let $V=\bigotimes_{i=1}^t V_i$, where each $V_i$ is a $\F_q$-vector space of dimension $d_i$, for every $1 \leq i \leq t$. Assume that $G=G_1 \otimes \cdots \otimes G_t$ acts on $V$, where $G_i$ is a nontrivial subgroup of $\GL(V_i)$. Assume further that $V$ is an irreducible $G$-module.  If, for some $1 \leq i \leq t$, the group $H^1(G_i, V_i)$ vanishes, then $H^1(G,V)=0$.
 In particular, if $p> (\dim V)^{1/t} + 2$, then $H^1(G, V) = 0$.
\end{pro}

\begin{proof} 
Let $n := \prod_{i=1}^t d_i$. 
By \cite[\S 4.4, pag.\ 129]{KL},
$V$ is a homogeneous $G_i$-module, i.e., $V=\bigoplus _{j=1}^{\frac{n}{d_i}} W_j$, where
$W_j$ is an irreducible $G_i$-module isomorphic to $V_i$, for every $1\leq j\leq \frac{n}{d_i}$.
Thus, if $G_i$ is nontrivial and $H^1(G_i, V_i)=0$, we have
$H^1(G_i, V) = H^1(G_i, \bigoplus _{j=1}^{\frac{n}{d_i}} W_j) \cong \bigoplus _{j=1}^{\frac{n}{d_i}} H^1(G_i, V_i)=0$. The triviality of $H^1(G,V)$ then follows from Corollary \ref{extension}, because $G_i$ is normal in $G$. 

Since the action of $G$ on $V$ is irreducible, the action of each $G_i$ on the corresponding space $V_i$ is also irreducible, hence $H^1(G_i, V_i)=0$ for all $p> \dim V_i + 2$ by Theorem \ref{P1_bis}. In particular, letting $i$ be the index for which $\dim V_i$ is minimal, we have $\dim V \geq (\dim V_i)^t$, and the final claim follows.
\end{proof}





\section{An application to a question of Cassels} \label{last}
Let $\A$ be an abelian variety defined over a number field $k$ and let $\A^{\vee}$ be its dual abelian variety. We denote by $\overline{k}$ the algebraic closure of $k$
and by $G_k$ the absolute Galois group $\Gal(\overline{k}/k)$.
Let $\A[p^m]$ be the $p^m$-torsion subgroup of $\A$ and $\A[p^m]^{\vee}$ be its Cartier dual. 
As recalled in the introduction, an element $\sigma\in \Sha(k,\A)$ is said to be \textit{infinitely
divisible} by a prime $p$ in $H^1(G_k,\A)$ when it is divisible by $p^m$ for all positive integers $m$.
In this case, for every $m$, there exists $\tau_m\in H^1(G_k,\A)$ such that $\sigma=p^m \tau_m$.
Creutz \cite[Theorem 3]{Cre2} showed that $\Sha(k,\A)\subseteq p^mH^1(G_k,\A)$ if and only
if the image of the natural map  $\Sha(k,\A[p^m]^{\vee})\rightarrow \Sha(k,\A^{\vee})$
is contained in the maximal divisible subgroup of $\Sha(k,\A^{\vee})$. 
In particular, the vanishing of  $\Sha(k,\A[p^m]^{\vee})$, for all $m\geq 1$, is a sufficient condition
for an affirmative answer to Cassels' question for $p$ and $\A/k$ (see also \cite[Proposition 4.3]{CS}).
If $\A$ has a principal polarization, then $\A[p^m]$ and $\A[p^m]^{\vee}$ are isomorphic. Therefore
the triviality of  $\Sha(k,\A[p^m])$, for all $m\geq 1$, implies an affirmative answer to Cassels'
question for principally polarized abelian varieties. \c{C}iperiani and Stix \cite{CS} gave the following sufficient condition for the vanishing of $\Sha(k,\A[p^m])$ for every $m\geq 1$. 

\begin{thm}[\c{C}iperiani, Stix, 2015] \label{CS} 
Let $G$ be the Galois group of the finite extension $k(\A[p])/k$, where $k(\A[p])$ denotes the $p$-division field of $\A$ over $k$.
Assume that \begin{description}
\item[1)] $H^1(G,\A[p])=0$  and 
\item[2)] the $G_k$-modules $\A[p]$ and $\textrm{End}(\A[p])$ have no common irreducible subquotient.
\end{description}

\noindent Then $$\hspace{0.5cm} \Sha(k,\A[p^m])=0, \textrm{ for every } m\geq 1.$$
\end{thm}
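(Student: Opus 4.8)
This statement is \cite[Theorem D]{CS}, so in the paper it is invoked rather than reproved; here I sketch the strategy I would follow to establish it. The plan is to argue by induction on $m$, with the case $m=1$ as anchor, using hypothesis \textbf{1)} to settle the base case and hypothesis \textbf{2)} only in the inductive step.

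For the base case $\Sha(k,\A[p])=0$, I would set $L:=k(\A[p])$ and $\Gamma:=\Gal(L/k)$ and use the inflation--restriction sequence
$$0\to H^1(\Gamma,\A[p])\to H^1(G_k,\A[p])\to H^1(G_L,\A[p]).$$
Hypothesis \textbf{1)} kills the left-hand term, so restriction to $L$ is injective on $H^1(G_k,\A[p])$. A class in $\Sha(k,\A[p])$ is locally trivial at every place of $k$, hence its restriction is locally trivial at every place of $L$ and lies in $\Sha(L,\A[p])$. Since $G_L$ acts trivially on $\A[p]\cong(\Z/p)^{2g}$, and $\Sha^1(L,\Z/p)=0$ for every number field $L$ (an everywhere locally split cyclic extension is trivial, by Chebotarev), one gets $\Sha(L,\A[p])=0$, and injectivity forces $\Sha(k,\A[p])=0$. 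Note hypothesis \textbf{2)} is not needed here.

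For the inductive step I would use the short exact sequence of $G_k$-modules
$$0\to \A[p]\to \A[p^{m+1}]\xrightarrow{p}\A[p^m]\to 0,$$
the quotient map being multiplication by $p$. Given $c\in\Sha(k,\A[p^{m+1}])$, its image in $H^1(k,\A[p^m])$ is again everywhere locally trivial, so it lies in $\Sha(k,\A[p^m])=0$ by the inductive hypothesis; hence $c$ lifts to $c'\in H^1(k,\A[p])$, well defined modulo the image of the global connecting map $\delta\colon \A[p^m](k)\to H^1(k,\A[p])$. The difficulty is that local triviality of $c$ only forces $\textrm{res}_v(c')\in\textrm{im}(\delta_v)$ at each place $v$, not $\textrm{res}_v(c')=0$; thus $c'$ sits in the Selmer-type subgroup $\{x:\textrm{res}_v(x)\in\textrm{im}(\delta_v)\ \forall v\}$, and I must show this subgroup equals $\textrm{im}(\delta)$ (the base case $\Sha(k,\A[p])=0$ then removes the residual kernel, giving $c=\iota(c')=0$).

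This local--global defect is exactly where hypothesis \textbf{2)} is indispensable, and it is the main obstacle. The maps $\delta,\delta_v$ are governed by the class of the extension $\A[p^{m+1}]$ in $\textrm{Ext}^1_{G_k}(\A[p^m],\A[p])$; already for $m=1$ the extension $\A[p^2]$ has class in $\textrm{Ext}^1_{G_k}(\A[p],\A[p])\cong H^1(k,\textrm{End}(\A[p]))$, which is why $\textrm{End}(\A[p])$ enters. A d\'evissage along Jordan--H\"older factors identifies the obstruction with terms built from $\textrm{Hom}_{G_k}(J,J')$, with $J$ an irreducible subquotient of $\A[p]$ and $J'$ an irreducible subquotient of $\textrm{End}(\A[p])$. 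Hypothesis \textbf{2)}---no common irreducible subquotient---makes every such $\textrm{Hom}_{G_k}(J,J')$ vanish by Schur's lemma, forcing the obstruction to be trivial. The hard part is making this identification precise and uniform in $m$ and in $v$; the cleanest route is likely a Poitou--Tate duality computation, pairing $\Sha^1(k,\A[p^m])$ against $\Sha^2$ of the Cartier dual $\A^{\vee}[p^m]$, combined with the vanishing just described.
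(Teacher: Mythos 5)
First, note that the paper never proves this statement: it is quoted from \c{C}iperiani--Stix and used as a black box (the paper's own contribution is Theorem \ref{thm_cass}, which feeds Theorem \ref{P1_bis} into it), so your sketch can only be measured against the original argument in \cite{CS}. Your base case is correct and complete, and it is essentially the argument everyone (including \cite{CS}) uses: hypothesis \textbf{1)} plus inflation--restriction makes restriction to $L=k(\A[p])$ injective on $H^1(G_k,\A[p])$, a class of $\Sha(k,\A[p])$ restricts into the everywhere-locally-trivial part of $\textrm{Hom}(G_L,(\Z/p\Z)^{2g})$, and Chebotarev kills that.

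The genuine gap is in the inductive step, and it is larger than your phrase ``the main obstacle'' suggests: your reduction to the claim that $S=\{x\in H^1(k,\A[p]) : \textrm{res}_v(x)\in \textrm{im}(\delta_v)\ \text{for all } v\}$ equals $\textrm{im}(\delta)$ is not a reduction at all, because $\iota_*$ induces an isomorphism $S/\textrm{im}(\delta)\simeq \Sha(k,\A[p^{m+1}])\cap \textrm{im}(\iota_*)$, and under the inductive hypothesis $\Sha(k,\A[p^m])=0$ the right-hand side is all of $\Sha(k,\A[p^{m+1}])$; so ``$S=\textrm{im}(\delta)$'' is literally equivalent to the statement you are trying to prove, and the entire content of the theorem is deferred to the paragraph where hypothesis \textbf{2)} is only gestured at (d\'evissage, Schur, Poitou--Tate) with no mechanism supplied. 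In particular it is not explained how Poitou--Tate duality, which pairs $\Sha^1(k,\A[p^m])$ against $\Sha^2$ of the \emph{Cartier} dual $\A^{\vee}[p^m]$, would ever see hypothesis \textbf{2)}, which concerns the \emph{linear} dual object $\textrm{End}(\A[p])\cong \A[p]^{*}\otimes\A[p]$. The actual proof in \cite{CS} avoids local conditions in the induction altogether: it proves by induction on $m$ that the finite group cohomology $H^1(\Gal(k(\A[p^m])/k),\A[p^m])$ vanishes, the key input being the $G_k$-equivariant embedding $\Gal(k(\A[p^{m+1}])/k(\A[p^m]))\hookrightarrow \textrm{End}(\A[p])$, $\sigma\mapsto (x\mapsto \sigma(\tilde{x})-\tilde{x})$ with $p^m\tilde{x}=x$, so that hypothesis \textbf{2)} together with Schur's lemma kills every $G_k$-equivariant homomorphism from these Galois layers into subquotients of $\A[p^m]$ and lets inflation--restriction climb; then your base-case Chebotarev argument, run at level $m$ with trivializing field $k(\A[p^m])$, converts this group-cohomology vanishing into $\Sha(k,\A[p^m])=0$. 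So the workable skeleton inducts on group cohomology and applies Chebotarev at every level, rather than inducting on $\Sha$ itself.
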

\noindent Combining this result with Theorem \ref{P1_bis} we obtain the following criterion. 
\begin{thm} \label{thm_cass}
Let $\A$ be an abelian variety of dimension $g$ defined over a number field $k$.
For all $p>2g+2$, if $\A[p]$ is an irreducible $G_k$-module and $\End(\A[p])$ has no subquotient
isomorphic to $\A[p]$,  then
 $$\hspace{0.5cm} \Sha(k,\A[p^m])=0, \textrm{ for every } m\geq 1.$$ 
\end{thm}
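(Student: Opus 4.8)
The plan is to deduce Theorem \ref{thm_cass} from the criterion of \c{C}iperiani and Stix recalled in Theorem \ref{CS}, whose conclusion is exactly the desired vanishing $\Sha(k,\A[p^m])=0$ for all $m\geq 1$. Set $V:=\A[p]$, an $\FF_p$-vector space of dimension $n=2g$ on which $G_k$ acts through the finite quotient $\Gamma:=\Gal(k(\A[p])/k)$; by the very definition of the $p$-th division field this action is faithful, and by hypothesis $V$ is an irreducible $\Gamma$-module, hence in particular semisimple. The whole proof then consists in verifying hypotheses \textbf{1)} and \textbf{2)} of Theorem \ref{CS} for this $\Gamma$ and $V$.

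First I would dispatch hypothesis \textbf{1)}, the vanishing $H^1(\Gamma,V)=0$. Since $\dim_{\FF_p}V=2g$, the bound $p>(4g+1)^2$ is precisely the bound $p>(2n+1)^2$ with $n=2g$. As $\Gamma$ acts faithfully and semisimply on $V\cong\FF_p^{2g}$, Theorem \ref{P1_bis} applies verbatim and gives $H^1(\Gamma,V)=H^1(\Gal(k(\A[p])/k),\A[p])=0$. This step is only a matter of matching the hypotheses of Theorem \ref{thm_cass} to those of Theorem \ref{P1_bis}, and it is the reason the exponent $2$ and the shape $(4g+1)^2$ appear.

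The substantive step is hypothesis \textbf{2)}: that $V$ and $\textrm{End}(V)=V\otimes V^*$ have no common irreducible $G_k$-subquotient. Because $V$ is irreducible it is its own unique irreducible subquotient, so it suffices to prove that $V$ is \emph{not} a composition factor of $\textrm{End}(V)$. I would compare the action of a single well-chosen element: for a prime $\mathfrak{q}\nmid p$ of good reduction the eigenvalues of $\mathrm{Frob}_{\mathfrak{q}}$ on $V$ are the reductions $\alpha_1,\dots,\alpha_{2g}$ of the Weil numbers of $\A$, all of archimedean absolute value $N\mathfrak{q}^{1/2}$, whereas on $\textrm{End}(V)$ the eigenvalues are the ratios $\alpha_i/\alpha_j$, of absolute value $1$. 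If $V$ were a composition factor of $\textrm{End}(V)$, then the char poly of $\mathrm{Frob}_{\mathfrak{q}}$ on $V$ would divide that on $\textrm{End}(V)$, so the multiset $\{\alpha_i\}$ would be contained in $\{\alpha_i/\alpha_j\}$; taking the product of eigenvalues forces the determinant $\prod_i\alpha_i=N\mathfrak{q}^g$ to coincide modulo $p$ with a balanced monomial $\prod_j\alpha_j^{e_j}$ with $\sum_j e_j=0$, a quantity of archimedean size $1\neq N\mathfrak{q}^g$. The two algebraic numbers being distinct, they are congruent modulo $p$ only for the finitely many primes dividing their difference, so for $p$ large no such congruence occurs and $V$ cannot be a subquotient of $\textrm{End}(V)$. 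Alternatively, if the image $\Gamma$ contains a nontrivial homothety $\lambda\,\Id$, one sees at once that it acts as multiplication by $\lambda\neq 1$ on $V$ but trivially (by conjugation) on $\textrm{End}(V)$, so the two modules have distinct central characters and share no composition factor; here the Weil pairing gives $\det V=\chi^g$, which controls how such scalars sit in the image.

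The main obstacle is exactly this hypothesis \textbf{2)}: making the eigenvalue comparison (or the homothety argument) unconditional for \emph{every} $p>(4g+1)^2$ and every number field $k$, rather than merely for $p$ outside an ineffective finite set. The delicate point is that the failure of \textbf{2)} would impose strong multiplicative relations among the Frobenius eigenvalues of all elements of $\Gamma$, and one must rule this out using the irreducibility of $V$ together with the explicit size of $p$. Once this is done, Theorem \ref{CS} yields $\Sha(k,\A[p^m])=0$ for every $m\geq 1$, completing the proof.
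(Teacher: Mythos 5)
Your verification of hypothesis \textbf{1)} of Theorem \ref{CS} is exactly the paper's: identify $\A[p]$ with $\FF_p^{2g}$, note that $\Gal(k(\A[p])/k)$ acts faithfully and (being irreducible) semisimply on it, and apply Theorem \ref{P1_bis} with $n=2g$, which is precisely where the bound $(4g+1)^2=(2n+1)^2$ comes from. Up to that point the two proofs coincide.

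The gap is in hypothesis \textbf{2)}, and you name it yourself. Your Frobenius argument shows that if $\A[p]$ were a composition factor of $\textrm{End}(\A[p])$, then $N\mathfrak{q}^g$ would have to be congruent modulo a prime above $p$ to a balanced monomial in the Weil numbers; since these are distinct algebraic numbers, only the finitely many primes dividing their nonzero difference can occur. But that excluded set depends on $\A$ and on the auxiliary prime $\mathfrak{q}$, and nothing in your sketch converts it into the uniform, explicit condition $p>(4g+1)^2$ that the statement demands. The homothety variant is likewise conditional: nothing guarantees that the image of $G_k$ in $\GL_{2g}(\FF_p)$ contains a nontrivial scalar. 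So, as written, the proposal proves the conclusion only for $p$ outside an ineffective finite set, which is strictly weaker than the theorem. By contrast, the paper's treatment of \textbf{2)} uses no arithmetic input at all: it observes that the irreducible module $\A[p]$ has no irreducible subquotients other than itself, and deduces directly from this that $\A[p]$ and $\textrm{End}(\A[p])$ have no common irreducible subquotient, so no Frobenius analysis (and hence no uniformity problem) ever arises. It is worth saying that the point you isolate --- ruling out $\A[p]$ as a composition factor of $\textrm{End}(\A[p])$ --- is exactly what that one-line deduction in the paper passes over in silence, so your instinct that this is the substantive content of \textbf{2)} is sound; but identifying the subtle step is not the same as closing it, and your proposal leaves the theorem unproved at the stated level of uniformity.
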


\begin{proof}
It is well known that if $\A$ has dimension $g$, then $\A[p]\simeq (\ZZ/p\ZZ)^{2g}\simeq \FF_p^{2g}$
(see for example \cite[Chap. II, pag.\ 61]{Mum}). Thus, up to the choice of an $\F_p$-basis of $\A[p]$, the group $G$ has a natural faithful representation in
$\GL_{2g}(p)$. 
By Theorem \ref{P1_bis}, if $\A[p]$ is an irreducible $G_k$-module (hence an irreducible $G$-module)
and $p>2g+2$, then $H^1(G,\A[p])=0$ and \textbf{1)} in Theorem \ref{CS} is satisfied. 
Since we have assumed that $\A[p]$ is irreducible and that $\End(\A[p])$ has no subquotient
isomorphic to $\A[p]$,  then
$\A[p]$ and $\textrm{End}(\A[p])$ do not have a common irreducible
subquotient and \textbf{2)} is also satisfied. Theorem \ref{CS} now implies $\Sha(k,\A[p^m])=0$ for all $m\geq 1$.
\end{proof}

\noindent From Theorem \ref{thm_cass} we now derive Theorem \ref{cor_cass}, which gives sufficient conditions
for an affirmative answer to Cassels' question for $p$ and $\A/k$, even in the case of non-principally polarised abelian varieties $\A$. 

\bigskip
\noindent \textbf{Proof of Theorem \ref{cor_cass}.}
Let $p>2g+2$. Suppose that $\A[p]^{\vee}$ has a $G_k$-submodule $W$ which is stable under the action
of $G_k$. Since $\A[p]$ is an irreducible $G_k$-module, 
the dual $W^{\vee}$ of $W$ is either isomorphic to $\A[p]$
or it is trivial. Consequently, $W$ is trivial or it is the whole $\A[p]^{\vee}$. Hence
$\A[p]^{\vee}$ is an irreducible $G_k$-module.  
Furthermore, it is easy to see that if $S$ is a subquotient of $\End(\A[p])$, then
$S^{\vee}$ is a subquotient of $\End(\A[p])^{\vee}$. 
Since $\End(\A[p])\simeq\End(\A[p]^{\vee})\simeq\End(\A[p])^{\vee}$, we have that $\A[p]$ is isomorphic to a subquotient of
$\End(\A[p])$ if and only if $\A[p]^{\vee}$ is isomorphic to a subquotient of
$\End(\A[p]^{\vee})$. Therefore, $\End(\A[p])$ has
no subquotient
isomorphic to $\A[p]$ if and only if
$\textrm{End}(\A[p]^{\vee})$ has no subquotient
isomorphic to $\A[p]^{\vee}$.  
Thus, we can apply Theorem \ref{thm_cass} to $\A[p]^{\vee}$ and obtain $\Sha(k,\A[p^m]^{\vee})=0$, for every $m\geq 1$ and $p>2g+2$.
By \cite[Theorem 3]{Cre2}, this shows that
Cassels' question has a positive answer for $p$ and $\A$ (and also for $p$ and $\A^{\vee}$, by applying Theorem \ref{thm_cass} and \cite[Theorem 3]{Cre2} to $\A[p]$). \eop

\bigskip
\noindent \textbf{Proof of Corollary \ref{last_cor}.}
As in the proof of Theorem \ref{cor_cass}, we obtain $\Sha(k,\A[p^m]^{\vee})=0$ for all $p>2g+2$ and all $m \geq 1$.
By \cite[Theorem 2.1]{Cre}, for every positive integer $n$, the triviality of $\Sha(k,\A[n]^{\vee})$ is
a sufficient condition for the validity of the local-global principle for  divisibility by $n$
in $H^1(G_k, \A)$ (see also \cite[Theorem C]{CS}). The corollary follows. \eop

\bigskip\noindent \emph{Acknowledgments}. We thank an anonymous referee
for pointing out some mistakes in an earlier version of this paper.
We warmly thank Gabriele Ranieri, Florence Gillibert, John van Bon
and Rocco Chiriv\`i for useful discussions. We are grateful to Alessandra Caraceni for some helpful remarks. Davide Lombardo and Laura Paladino are members of INdAM-GNSAGA. Lombardo acknowledges funding from the University of Pisa via PRA 2022-10 ``Spazi di moduli, rappresentazioni e strutture combinatorie''.

\bigskip

Davide Lombardo\par\smallskip
Dipartimento di Matematica\par
Universit\`a di Pisa \par
Largo Bruno Pontecorvo, 5\par
56127 Pisa (PI)\par
Italy\par 
e-mail address: davide.lombardo@unipi.it

\vskip 0.5cm

Laura Paladino\par\smallskip
Dipartimento di Matematica e Informatica\par
Universit\`a della Calabria \par
Ponte Bucci, Cubo 30B\par
87036 Rende (CS)\par
Italy\par 
e-mail addresses: laura.paladino@unical.it


\end{document}